\begin{document}
\newtheorem{The}{Theorem}[section]
\newtheorem{Con}{Conjecture}[section]
\newtheorem{proposition}{Proposition}[section]

\numberwithin{equation}{section}

\title{Existence results for a fourth order partial differential equation arising in condensed matter physics}

\author{\|Carlos |Escudero|, Madrid,
        \|Filippo |Gazzola|, Milan,
        \|Robert |Hakl|, Brno,
        \|Ireneo |Peral|, Madrid,
        \|Pedro J. |Torres|, Granada}

\rec {September 17, 2013}

\abstract
   We study a higher order parabolic partial differential equation that arises in the context of condensed matter physics. It is a
   fourth order semilinear equation whose nonlinearity is the determinant of the Hessian matrix of the solution. We consider this model in
   a bounded domain of the real plane and study its stationary solutions both when the geometry of this domain is arbitrary and when it is
   the unit ball and the solution is radially symmetric. We also consider the initial-boundary value problem for the full parabolic equation.
   We summarize our results on existence of solutions in these cases and propose an open problem related to the existence of self-similar
   solutions.
\endabstract

\keywords
Higher order parabolic equations, existence of solutions, blow-up in finite time, higher order elliptic equations,
variational methods, strongly singular boundary value problems.
\endkeywords

\subjclass
34B16, 35G20, 35J50, 35J60, 35K25, 35K91
\endsubjclass

\thanks
   The research has been supported by projects MTM2010-18128, RYC-2011-09025 and SEV-2011-0087, MINECO, Spain,
   and project RVO 67985840, Czech Republic.
\endthanks

\section{Introduction}\label{int}

We are interested in the initial-boundary problem for $u=u(x,y,t)$ solving the following parabolic equation
\begin{equation}\label{parabolicpde}
\partial_t u + \Delta^2 u = \det(D^2 u) + \lambda h,
\end{equation}
subject to the initial condition $u(x,y,0)=u_0(x,y)$ and where $h$ is some function depending in general on both space and time coordinates
and belonging to some suitable Lebesgue space, $\lambda \in \mathbb{R}$.
This equation is to be solved for $(x,y) \in \Omega \subset \mathbb{R}^2$, where $\Omega$ is open, bounded and provided with a smooth boundary,
and $t > 0$. We will consider two different sets of boundary conditions: Dirichlet and Navier.

This partial differential equation is a model that arises in the coarse-grained description of epitaxial growth
processes in the field of condensed matter physics. The function $u=u(x,y,t)$ models the height of the growing film
at the spatial point $(x,y)$ at time $t$. Note that the evolution is dictated by the competition among the determinant
of the Hessian matrix of the solution and the bilaplacian. These terms model the dynamics at the solid surface. The function $h$
models the introduction of new mass on the surface, and the parameter $\lambda$ measures the intensity of this income of new mass.
A formal derivation of this model in terms of geometric quantities can be found in~\cite{n2} and references therein.

We have partially analyzed this model in a series of recent works~\cite{n1,n2,n3,n4}.
We have shown the existence of stationary solutions to
this partial differential equation, in the case $h$ were time independent,
for two different sets of boundary conditions: Dirichlet and Navier. The proofs are different for the
two sets of boundary conditions because the variational structure that is present in the Dirichlet problem
is absent in the Navier one~\cite{n4}. For solutions that are radially symmetric we recover the variational structure
in both cases, and the proofs of existence of solutions are built making an explicit use of this fact~\cite{n2}.
Furthermore, in the case of radially symmetric solutions it is possible to prove non-existence of solutions
for large enough data~\cite{n3}. For the evolution problem it is possible to prove local existence of solutions
for arbitrary data and global existence of solutions for small (but otherwise arbitrary) data. Depending on the boundary conditions and
the concomitant presence of a variational structure in the equation as well as on the size of the data
it is possible to prove blow-up of the solution in finite time and convergence to a stationary solution
in the long time limit~\cite{n1}.

A summary of these results will be exposed in the next section. Together with these proven facts, there is a number of
questions that remain open for both partial differential equation~\eqref{parabolicpde} and its stationary counterpart. One of them is
the existence of self-similar solutions and its possible role in the blow-up structure. We describe an open question related to
the existence of self-similar solutions in section~\ref{sss}.

\section{Summary of previous results concerning existence of solutions}\label{summ}

In this section we list without proof some results we have recently obtained. The proofs can be found in the references quoted in the
text next to the statement in question. We are going to consider the two following sets of boundary conditions
$u= \partial_n u =0$ on $\Omega$,
which we refer to as Dirichlet boundary conditions, and
$u= \Delta u =0$ on $\Omega$,
which we refer to as Navier boundary conditions.
Following~\cite{n4} we find that the Dirichlet problem for the stationary version of~\eqref{parabolicpde}
\begin{equation}\label{ellipticpde}
\Delta^2 u = \det(D^2 u) + \lambda h,
\end{equation}
where of course $h$ is assumed to be time independent,
has a variational structure:

\begin{The}\label{T1}
The critical points of the functional
\begin{eqnarray}\nonumber
J_\lambda:W^{2,2}_0(\Omega)&\rightarrow& \mathbb{R} \\
\label{functional}
u &\rightarrow& J_\lambda(u)=\displaystyle \frac12\int_\Omega |\Delta u|^2 \, dx \, dy
-\int_\Omega u_{x} u_{y} u_{x y} \, dx \, dy - \lambda \int_\Omega h u \, dx \, dy,
\end{eqnarray}
are weak solutions to the Dirichlet problem for~\eqref{ellipticpde}.
\end{The}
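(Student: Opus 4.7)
\smallskip
\noindent\textbf{Proof proposal.}
The plan is to identify the Euler--Lagrange equation of $J_\lambda$ by computing its Gateaux derivative in an arbitrary direction $v\in W^{2,2}_0(\Omega)$ and showing that the resulting bilinear/trilinear expression is exactly the weak form of $\Delta^2 u=\det(D^2u)+\lambda h$ with Dirichlet data. Before differentiating, I would first record that $J_\lambda$ is well defined on $W^{2,2}_0(\Omega)$: in dimension two the embedding $W^{2,2}(\Omega)\hookrightarrow W^{1,p}(\Omega)$ for every $p<\infty$ gives $u_x,u_y\in L^p$, while $u_{xy}\in L^2$, so the cubic term $\int_\Omega u_xu_yu_{xy}$ and (assuming $h\in L^2$ or better) the linear term are both finite; the quadratic term $\tfrac12\int|\Delta u|^2$ is just the squared norm.

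The linear and quadratic pieces are standard: the derivative at $u$ in the direction $v$ of $\tfrac12\int_\Omega|\Delta u|^2$ is $\int_\Omega \Delta u\,\Delta v$, and that of $-\lambda\int_\Omega hu$ is $-\lambda\int_\Omega hv$. The entire content of the theorem therefore concentrates in the trilinear term $F(u)=\int_\Omega u_xu_yu_{xy}$. A direct differentiation gives
\begin{equation*}
F'(u)[v]=\int_\Omega\bigl(v_xu_yu_{xy}+u_xv_yu_{xy}+u_xu_yv_{xy}\bigr)\,dx\,dy,
\end{equation*}
and I would then integrate by parts each of the three summands, using that $v$ and $\partial_nv$ vanish on $\partial\Omega$ so that all boundary contributions drop. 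A careful bookkeeping (integrating the first term by parts in $x$, the second in $y$, and the third twice, once in $x$ and once in $y$) produces a cancellation of the $u_{xy}^2$ contributions with the correct sign and collects the surviving interior terms into
\begin{equation*}
F'(u)[v]=\int_\Omega\bigl(u_{xx}u_{yy}-u_{xy}^2\bigr)\,v\,dx\,dy=\int_\Omega\det(D^2u)\,v\,dx\,dy.
\end{equation*}

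Combining the three derivatives, the condition $\langle J_\lambda'(u),v\rangle=0$ for all $v\in W^{2,2}_0(\Omega)$ reads
\begin{equation*}
\int_\Omega\Delta u\,\Delta v\,dx\,dy=\int_\Omega\det(D^2u)\,v\,dx\,dy+\lambda\int_\Omega hv\,dx\,dy,
\end{equation*}
which is precisely the weak formulation of the Dirichlet problem for \eqref{ellipticpde}, so the theorem follows. The only delicate point in the argument is the algebraic cancellation in the computation of $F'(u)[v]$: the three integrations by parts each generate several cubic terms in the third derivatives of $u$, and one must verify that the terms involving $u_{xxy}$ and $u_{xyy}$ cancel in pairs (so that no third-order derivative survives, which is necessary since $u\in W^{2,2}_0$ a priori has no more regularity) while the $u_{xy}^2$ terms combine with the right sign. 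Once this identity is established, converting ``critical point'' into ``weak solution'' is automatic.
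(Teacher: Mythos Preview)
Your proposal is correct and is the standard variational computation. Note, however, that the paper itself does not actually prove Theorem~\ref{T1}: Section~\ref{summ} explicitly states that the results there are listed without proof, and for this theorem the reader is referred to~\cite{n4}. So there is no ``paper's own proof'' to compare against here.

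On the substance: your bookkeeping for $F'(u)[v]$ works exactly as you describe for smooth $u$ --- integrating the first summand in $x$, the second in $y$, and the third in $x$ then $y$ produces
\[
\int_\Omega v\bigl(-u_{xy}^2-u_yu_{xxy}-u_{xy}^2-u_xu_{xyy}+u_yu_{xxy}+u_{xx}u_{yy}+u_{xy}^2+u_xu_{xyy}\bigr)=\int_\Omega v\,\det(D^2u),
\]
and the identity then extends to $u\in W^{2,2}_0(\Omega)$ by density, as you note. A slightly cleaner variant that never introduces third derivatives of $u$ (and hence needs no density step) is to integrate only the third summand $\int u_xu_yv_{xy}$ once in $y$; this gives $-\int u_{xy}u_yv_x-\int u_xu_{yy}v_x$, the first of which cancels the original first summand, leaving
\[
F'(u)[v]=\int_\Omega u_xu_{xy}v_y-\int_\Omega u_xu_{yy}v_x=\int_\Omega \det(D^2u)\,v,
\]
the last equality following from the divergence identity $\det(D^2u)=(u_xu_{yy})_x-(u_xu_{xy})_y$. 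Either way your conclusion stands.
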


This variational structure can be used to prove the existence of at least two solutions to the corresponding boundary value problem.

\begin{The}\label{T2}
Let $h\in L^1(\Omega)$. Then there
exists a $\lambda_0 >0$ such that for $0 \le \lambda<\lambda_0$, the Dirichet problem for
equation~\eqref{ellipticpde} has at least two solutions in $W^{2,2}_0(\Omega)$.
\end{The}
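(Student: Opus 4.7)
The strategy is classical: obtain one solution as a local minimum of $J_\lambda$ close to the origin and a second one via the mountain pass theorem. The two geometric ingredients needed are (i) a sphere $\{\|\Delta u\|_2 = \rho\}$ on which $J_\lambda$ is bounded below by some $\alpha>0$ while $J_\lambda$ attains non-positive values inside it, and (ii) a direction along which $J_\lambda \to -\infty$.

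To establish (i), one exploits that $\Omega \subset \mathbb{R}^2$, so that $W^{2,2}_0(\Omega)$ embeds continuously into $W^{1,q}(\Omega)$ for every finite $q$ and compactly into $C^0(\bar\Omega)$, and that $\|\Delta u\|_2$ is an equivalent norm on $W^{2,2}_0(\Omega)$. H\"older's inequality gives
\[
\Big|\int_\Omega u_x u_y u_{xy}\,dx\,dy\Big| \le \|u_x\|_4 \|u_y\|_4 \|u_{xy}\|_2 \le C \|\Delta u\|_2^3,
\]
and $|\lambda\int h u| \le C'\lambda \|h\|_1 \|\Delta u\|_2$, so on $\{\|\Delta u\|_2 = \rho\}$ one has $J_\lambda(u) \ge \tfrac{\rho^2}{2} - C\rho^3 - C'\lambda\|h\|_1\rho$, which is strictly positive for $\rho$ small and $\lambda<\lambda_0$ chosen accordingly. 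The functional $J_\lambda$ is weakly lower semicontinuous on $W^{2,2}_0(\Omega)$ (the bilaplacian term is convex, while the cubic and linear terms are in fact weakly continuous thanks to Rellich--Kondrachov and the compact embedding into $C^0$), so it attains its infimum on the weakly compact set $\bar B_\rho$ at some $u_1$. Since $J_\lambda(0) = 0 < \alpha$ (and moreover $J_\lambda(tv)<0$ for small $t>0$ when $\lambda>0$ and $v$ is chosen with $\int h v > 0$), one gets $\inf_{\bar B_\rho} J_\lambda \le 0 < \alpha$, so $u_1$ lies in the open ball, yielding the first weak solution.

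For (ii), pick $\varphi \in W^{2,2}_0(\Omega)$ with $I(\varphi):=\int \varphi_x \varphi_y \varphi_{xy} > 0$ (the cubic form is not identically zero; after flipping signs if necessary, one may assume positivity). Then $J_\lambda(t\varphi) = \tfrac{t^2}{2}\|\Delta\varphi\|_2^2 - t^3 I(\varphi) - \lambda t \int h\varphi \to -\infty$ as $t\to+\infty$, so one can fix $t^*$ with $\|t^*\Delta\varphi\|_2 > \rho$ and $J_\lambda(t^*\varphi) < J_\lambda(u_1)$. The mountain pass geometry between $u_1$ and $t^*\varphi$, separated by the barrier $\alpha > J_\lambda(u_1)$, together with the Ambrosetti--Rabinowitz theorem then yields a critical point $u_2$ at the min-max level $c \ge \alpha$; in particular $u_2 \ne u_1$.

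The main technical obstacle is verifying the Palais--Smale condition at level $c$. For a (PS)$_c$ sequence $\{u_n\}$, the identity
\[
3J_\lambda(u_n) - \langle J_\lambda'(u_n), u_n \rangle = \tfrac{1}{2}\|\Delta u_n\|_2^2 - 2\lambda \int h u_n,
\]
which uses the $3$-homogeneity of the cubic form, combined with $J_\lambda(u_n)\to c$, $\langle J_\lambda'(u_n),u_n\rangle = o(\|\Delta u_n\|_2)$, and the Sobolev estimate on the last term, yields uniform boundedness of $\|\Delta u_n\|_2$. After passing to a weakly convergent subsequence $u_n \rightharpoonup u$, Rellich--Kondrachov supplies strong convergence $\nabla u_n \to \nabla u$ in every $L^q$. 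Finally, expanding $\langle J_\lambda'(u_n) - J_\lambda'(u), u_n - u\rangle = \|\Delta(u_n - u)\|_2^2 + R_n$ and controlling the cubic remainder $R_n$ via H\"older's inequality using the strong gradient convergence gives $\|\Delta(u_n-u)\|_2 \to 0$, completing the (PS) verification and hence the proof.
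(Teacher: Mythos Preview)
Your argument is correct and follows precisely the strategy the paper attributes to \cite{n4}, namely exploiting the mountain pass geometry of the functional $J_\lambda$: a local minimizer inside a small ball (using the cubic growth of the Hessian term versus the quadratic Dirichlet energy, together with the embeddings available in two dimensions) and a second critical point via Ambrosetti--Rabinowitz, with the Palais--Smale condition obtained from the $3$-homogeneity identity $3J_\lambda(u)-\langle J_\lambda'(u),u\rangle=\tfrac12\|\Delta u\|_2^2-2\lambda\int hu$ and the compactness of $\nabla u_n$ in every $L^q$. The paper itself does not reproduce the proof but only points to \cite{n4}; your sketch matches that approach.
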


The proof can be found in~\cite{n4} and makes use of the mountain pass geometry of functional~\eqref{functional}.
For the Navier problem for equation~\eqref{ellipticpde} we cannot use the above mentioned variational methods because we do not know
of any suitable functional in this case. Instead we have the following result:

\begin{The}\label{T3}
Let $h\in L^1(\Omega)$. Then there
exists a $\lambda_0 >0$ such that for $0 \le \lambda<\lambda_0$, the Navier problem for
equation~\eqref{ellipticpde} has at least one solution in $W^{1,2}_0(\Omega) \cap W^{2,2}(\Omega)$.
\end{The}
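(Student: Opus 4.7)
Since the Navier problem for \eqref{ellipticpde} lacks an evident variational structure, the natural route is a fixed-point argument on the Hilbert space $V := W^{1,2}_0(\Omega)\cap W^{2,2}(\Omega)$, equipped with the equivalent norm $\|u\|_V = \|\Delta u\|_{L^2(\Omega)}$. I would define the solution operator $T\colon V \to V$ by letting $u = T(v)$ be the unique weak solution of the \emph{linear} Navier problem
\begin{equation*}
\Delta^2 u = \det(D^2 v) + \lambda h \quad\text{in }\Omega, \qquad u = \Delta u = 0 \quad\text{on }\partial\Omega,
\end{equation*}
i.e.\ $\int_\Omega \Delta u\,\Delta\phi = \int_\Omega \det(D^2 v)\,\phi + \lambda\int_\Omega h\,\phi$ for every $\phi\in V$. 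Since $\dim\Omega = 2$, one has $V\hookrightarrow C^0(\overline{\Omega})$, so the right-hand side is a bounded linear functional on $V$ whenever $\det(D^2 v)\in L^1$ and $h\in L^1$. Lax--Milgram then produces $T(v)$ with the quadratic estimate $\|T(v)\|_V \le C_1\|v\|_V^2 + C_2\lambda\|h\|_{L^1}$.

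For $\lambda$ small enough, I would pick $R>0$ such that $C_1 R^2 + C_2\lambda\|h\|_{L^1}\le R$; the threshold $\lambda_0$ is then simply the largest $\lambda$ for which this quadratic inequality is solvable. On the closed ball $\overline{B_R}\subset V$, continuity of $T$ follows because $v_n\to v$ in $V$ forces $D^2 v_n\to D^2 v$ in $L^2$, hence $\det(D^2 v_n)\to \det(D^2 v)$ in $L^1$, and the linear estimate transfers the convergence to $T(v_n)\to T(v)$ in $V$.

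Compactness of $T(\overline{B_R})$ in $V$ is the decisive point, and for this I would invoke the divergence-form identity
\begin{equation*}
\det(D^2 v) = \partial_x(v_x v_{yy}) - \partial_y(v_x v_{xy}).
\end{equation*}
In two dimensions $V\hookrightarrow W^{1,q}$ for every $q<\infty$, so $v_x, v_y \in L^q$ while $v_{xy}, v_{yy}\in L^2$; Hölder yields $v_x v_{yy},\,v_x v_{xy}\in L^{2-\varepsilon}$ uniformly on $\overline{B_R}$, giving $\det(D^2 v)\in W^{-1,\,2-\varepsilon}(\Omega)$. Linear elliptic regularity for $\Delta^2$ under Navier BC (which factors as two successive Dirichlet problems for $-\Delta$) then upgrades $T(v)$ to $W^{3,\,2-\varepsilon}$, and Rellich--Kondrachov forces $T(\overline{B_R})$ to be precompact in $V$. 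Schauder's fixed point theorem delivers the desired solution.

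The main obstacle is precisely the borderline integrability of the nonlinearity: $\det(D^2 v)$ is only a priori $L^1$, which is not enough to close a compactness argument directly. The divergence-form rewriting, combined with the two-dimensional Sobolev embedding $W^{2,2}\hookrightarrow W^{1,q}$ for all finite $q$, is the technical gear that lifts $\det(D^2 v)$ into $W^{-1,\,2-\varepsilon}$ and hence makes the Schauder scheme succeed; verifying the dependence of $\lambda_0$ and $R$ on $\|h\|_{L^1}$, as well as the sharp form of the linear regularity theory on a domain with merely smooth boundary, will be the routine but careful parts of the argument.
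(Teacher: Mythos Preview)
Your Schauder scheme is correct, but the paper proceeds by the \emph{Banach} fixed point theorem, which is both shorter and stronger here. The point is that the determinant is \emph{bilinear} in $D^2 v$, so on your space $V$ one has
\[
\|\det(D^2 v_1)-\det(D^2 v_2)\|_{L^1(\Omega)} \le C\bigl(\|v_1\|_V+\|v_2\|_V\bigr)\,\|v_1-v_2\|_V,
\]
and combined with your own linear estimate this gives $\|T(v_1)-T(v_2)\|_V \le C' R\,\|v_1-v_2\|_V$ on $\overline{B_R}$. Choosing $R$ small makes $T$ a contraction, and then $\lambda_0$ is fixed so that $T(\overline{B_R})\subset\overline{B_R}$. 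This completely sidesteps what you call ``the main obstacle'': there is no need for the divergence rewriting of $\det(D^2 v)$, no $W^{-1,2-\varepsilon}$ bound, no $W^{3,2-\varepsilon}$ regularity, and no Rellich--Kondrachov. In fact your continuity argument already contains the Lipschitz estimate in disguise; you simply did not exploit it as a contraction. The Banach route also yields local uniqueness in $\overline{B_R}$ and, as the paper notes, transfers verbatim to the Dirichlet problem. Your approach would be the natural one if the nonlinearity were merely continuous in $D^2 v$, but here it deploys more machinery than the problem requires.
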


The proof makes use of Banach fixed point theorem, see~\cite{n4}. Note also that this proof can be immediately adapted
for the case of Dirichlet boundary conditions.

The radial problem corresponding to~\eqref{ellipticpde} reads
\begin{equation}
\nonumber \frac{1}{r} \left\{ r \left[ \frac{1}{r} \left(
r u' \right)' \right]' \right\}' = \frac{1}{r} \,
u' u'' + \lambda h(r),
\end{equation}
where $r=\sqrt{x^2+y^2}$. In this case, for both sets of boundary conditions, the problem admits a variational formulation.
And in both cases, due to the mountain pass geometry of the associated functional, it is possible to prove the existence of
at least two solutions for small enough $\lambda$, see~\cite{n2}. Furthermore, it is also possible to prove the non-existence
of solutions for large enough $\lambda$ as well as rigorous bounds for the values of $\lambda$ that separate existence from non-existence,
see~\cite{n3}. Note that these bounds are rather precise in certain cases when compared to the numerical estimations of the critical values
of $\lambda$ calculated in~\cite{n2}.
With respect to the full evolution problem~\eqref{parabolicpde}, we can prove the following theorem:

\begin{The}
Let $T>0$; for any $u_0\in W_0^{2,2}(\Omega)$, any $h\in L^2(0,T;L^2(\Omega))$ and any $\lambda \in \mathbb{R}$
the Dirichlet problem has a unique
solution in
$$
\mathcal{X}_T:=C(0,T;W_0^{2,2}(\Omega)) \cap L^2(0,T;W^{4,2}(\Omega)) \cap W^{1,2}(0,T;L^2(\Omega)),
$$
provided $T$ is sufficiently small.
Furthermore, for any $T \in (0,\infty]$ there exists a unique solution to this problem in the same space provided $\|u_0\|_{W^{2,2}}$
and $\lambda$ are small enough.
Moreover, if $[0,T^*)$ denotes the maximal interval of continuation of $u$ and if $T^*<\infty$ then $\|u(t)\|_{W^{2,2}}\to \infty$ as $t\to T^*$.
\end{The}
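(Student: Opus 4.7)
The plan is to treat all three assertions by a single Banach fixed-point argument in $\mathcal{X}_T$, with the Monge--Amp\`ere type nonlinearity controlled by a spatial Gagliardo--Nirenberg estimate combined with H\"older's inequality in time. Denote by $S(t)$ the $L^2$-analytic semigroup generated by $-\Delta^2$ with the Dirichlet conditions $u=\partial_n u=0$ on $\partial\Omega$. By the standard $L^2$-maximal regularity for this operator, for every $T>0$, every $f\in L^2(0,T;L^2(\Omega))$, and every $u_0\in W^{2,2}_0(\Omega)$, the linear problem $\partial_t u+\Delta^2 u=f$, $u(0)=u_0$, admits a unique solution in $\mathcal{X}_T$ satisfying
\begin{equation*}
\|u\|_{\mathcal{X}_T}\leq C\bigl(\|u_0\|_{W^{2,2}}+\|f\|_{L^2(0,T;L^2)}\bigr),
\end{equation*}
with $C$ independent of $T$. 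Let $\Phi(v)\in\mathcal{X}_T$ denote the solution of the linear problem with $f=\det(D^2v)+\lambda h$; a fixed point of $\Phi$ solves the nonlinear problem.

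The crucial estimate is the bound on the nonlinearity. Since $\Omega\subset\mathbb{R}^2$, the Gagliardo--Nirenberg inequality yields $\|D^2v\|_{L^4}\leq C\|D^3v\|_{L^2}^{1/2}\|D^2v\|_{L^2}^{1/2}$, and the interpolation inequality $\|D^3v\|_{L^2}^2\leq C\|D^4v\|_{L^2}\|D^2v\|_{L^2}$ combined with H\"older's inequality in $t$ gives
\begin{equation*}
\|D^2v\|_{L^4(0,T;L^4(\Omega))}\leq C\,T^{1/8}\,\|v\|_{L^\infty(0,T;W^{2,2})}^{3/4}\,\|v\|_{L^2(0,T;W^{4,2})}^{1/4}.
\end{equation*}
Because $\det(D^2v)=v_{xx}v_{yy}-v_{xy}^2$ is bilinear in the entries of $D^2v$, Cauchy--Schwarz implies
\begin{equation*}
\|\det(D^2v_1)-\det(D^2v_2)\|_{L^2(0,T;L^2)}\leq C\,T^{1/4}\bigl(\|v_1\|_{\mathcal{X}_T}+\|v_2\|_{\mathcal{X}_T}\bigr)\|v_1-v_2\|_{\mathcal{X}_T},
\end{equation*}
and in particular $\|\det(D^2v)\|_{L^2(L^2)}\leq CT^{1/4}\|v\|_{\mathcal{X}_T}^2$.

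Local existence and uniqueness for arbitrary data then follow by fixing $R:=2C(\|u_0\|_{W^{2,2}}+|\lambda|\,\|h\|_{L^2(L^2)})$ and choosing $T$ so small that $CT^{1/4}R\leq\tfrac12$: the estimates above show that $\Phi$ maps the closed ball $B_R\subset\mathcal{X}_T$ into itself and is a $\tfrac12$-contraction, so the Banach fixed-point theorem produces a unique $u\in B_R$. For global existence under smallness of the data, one omits the final H\"older step and works with the $T$-free variant $\|\det(D^2v)\|_{L^2(L^2)}\leq C\|v\|_{\mathcal{X}_T}^2$ coming directly from the Gagliardo--Nirenberg chain above; the same fixed-point scheme then closes on $(0,\infty)$ provided $\|u_0\|_{W^{2,2}}+|\lambda|\,\|h\|_{L^2(0,\infty;L^2)}$ is sufficiently small, which yields the second assertion.

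The blow-up alternative is the standard continuation argument. The local existence time produced above depends only on the quantity $\|u(\tau)\|_{W^{2,2}}+|\lambda|\,\|h\|_{L^2(\tau,\tau+\delta;L^2)}$ with $\delta>0$ fixed, so if $T^*<\infty$ and $\limsup_{t\uparrow T^*}\|u(t)\|_{W^{2,2}}<\infty$ were to hold, one could restart the fixed-point scheme from some $\tau$ close enough to $T^*$ on a uniform interval $[\tau,\tau+\delta_0]$, extending $u$ past $T^*$ and contradicting the definition of $T^*$. The principal technical obstacle is the nonlinearity bound: the determinant is quadratic in $D^2u$, essentially at the regularity threshold for $\mathcal{X}_T$ in two space dimensions, and it is precisely the $T^{1/8}$ gain in the Gagliardo--Nirenberg/H\"older estimate that allows the contraction to close for arbitrary initial data.
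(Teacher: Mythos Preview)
The paper does not include a proof of this theorem; it only records that the argument, carried out in~\cite{n1}, uses the Banach fixed-point theorem together with potential-well techniques. Your local-existence argument via maximal $L^2$-regularity and a contraction in $\mathcal{X}_T$, with the Hessian nonlinearity controlled by Gagliardo--Nirenberg in space and H\"older in time, is precisely the fixed-point scheme the paper refers to, and the estimates are correct.

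The two approaches diverge for the global small-data statement. You close the contraction directly on $(0,\infty)$ by dropping the $T^{1/8}$ factor, which is legitimate: one has $\|D^2v\|_{L^4(L^4)}^4\le C\|v\|_{L^\infty(H^2)}^2\|v\|_{L^2(H^3)}^2\le C\|v\|_{L^\infty(H^2)}^2\|v\|_{L^2(H^4)}^2$ with no $T$-dependence, and maximal regularity on $(0,\infty)$ holds with a uniform constant thanks to the spectral gap of $\Delta^2$ under Dirichlet conditions. The paper instead points to \emph{potential-well techniques}: one uses the energy functional $J_\lambda$ of Theorem~\ref{T1}, shows that small data remain trapped in a sublevel set, extracts a uniform-in-time $W^{2,2}$ bound, and concludes via the blow-up alternative. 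Your route is more self-contained and does not rely on the variational structure; the potential-well route, on the other hand, yields additional information on the long-time dynamics (convergence to a stationary state), which the paper mentions right after this theorem.

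One small correction on the blow-up alternative: your continuation argument assumes $\limsup_{t\uparrow T^*}\|u(t)\|_{W^{2,2}}<\infty$ and derives a contradiction, which only gives $\limsup=\infty$, whereas the statement asserts that the \emph{limit} is infinite. Replace the hypothesis by $\liminf_{t\uparrow T^*}\|u(t)\|_{W^{2,2}}<\infty$, choose $t_n\uparrow T^*$ with $\|u(t_n)\|_{W^{2,2}}$ bounded, and restart the fixed-point scheme at $t_n$; the local existence time is then bounded below uniformly in $n$, yielding the same contradiction and hence $\liminf=\infty$.
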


An analogous result holds for the Navier problem.
Note that for this last statement, unlike in the previous results concerning the stationary problems,
we have allowed the datum $h$ to depend on both space and time coordinates. The proof makes use of Banach
fixed point theorem and of the potential well techniques and can be found in~\cite{n1}.

We have proven more results concerning the parabolic problem~\eqref{parabolicpde} but we are not going to expose them because of their
more technical nature they require more preparatory results. They concern the asymptotic properties of the solutions to the Dirichlet problem,
that converge to zero in the long time provided $\lambda=0$ and the initial datum is small enough. Also, the blow-up in finite time that
takes place for both Navier and Dirichlet problems in case the initial datum is large enough. These results can be found in~\cite{n1}.
Precisely these results, and in particular those regarding the blow-up, motivate in part the study of self-similar solutions to the problems at hand.
In the following section we outline some facts about this sort of solutions.

\section{The search for self-similar solutions}\label{sss}

In this section we describe an open problem related to the existence of self-similar solutions to partial differential equation~\eqref{parabolicpde}.
For the first time we are going to consider this equation set on all the plane instead of on a bounded domain $\Omega$.
Our first step is setting $\lambda=0$. Next we look for solutions with the following form
$u(r,t) = \frac{1}{t^\beta} \, f\left(\frac{r}{t^\alpha}\right)$,
where $\alpha$ and $\beta$ are two real parameters to be fitted in order to find a closed ordinary differential equation involving the self-similar variable
$\eta=\frac{r}{t^\alpha}$ and the solution to equation~\eqref{parabolicpde} expressed as a function of this variable only $f(\eta)$.

Substituting our self-similar ansatz in~\eqref{parabolicpde} we find that this equation adopts the closed form
\begin{equation}\nonumber
4 f'(\eta)-\eta^4 f'(\eta)-4 \eta f''(\eta)-4 \eta^2 f'(\eta) f''(\eta) + 8 \eta^2 f'''(\eta) + 4 \eta^3 f''''(\eta)=0,
\end{equation}
only if $\alpha=1/4$ and $\beta=0$. Note the simplicity of this fact possibly makes it one of the simplest ways of looking
for self-similar solutions.
Now we have to provide suitable boundary conditions for this ordinary differential equation. Note that by its very nature
the self-similar variable describes a rotationally invariant solution. Therefore we assume the following symmetry conditions on the
solution to equation~\eqref{parabolicpde}: $\partial_r u(0,0,t)=0$ and $\partial_r \Delta_r u(0,0,t)=0$,
for a rotationally invariant $u$, where $\Delta_r(\cdot)=\frac{1}{r}\partial_r[r \partial_r(\cdot)]$.
Finally, we impose that both $u$ and $\Delta u$ decay
to zero as $r \to \infty$. So this leads to the boundary value problem:
\begin{eqnarray}
4 f'(\eta)-\eta^4 f'(\eta)-4 \eta f''(\eta)-4 \eta^2 f'(\eta) f''(\eta) + 8 \eta^2 f'''(\eta) + 4 \eta^3 f''''(\eta)=0,
\nonumber \\
\label{selfsimilar1}
f'(0)=f'''(0)=0, \quad
f(\eta), \, f''(\eta) \to 0 \quad \mbox{as } \eta \to \infty.
\end{eqnarray}
The existence of solutions to this problem automatically implies the existence of solutions of the form
$u(x,y,t) = f\left(\frac{r}{t^{1/4}}\right)$,
to partial differential equation~\eqref{parabolicpde}. The obvious fact that $u \equiv 0$ solves~\eqref{parabolicpde} with $\lambda=0$
and with the assumed boundary conditions directly translates into the
fact that $f \equiv 0$ solves boundary value problem~\eqref{selfsimilar1}.

Noting that equation~\eqref{selfsimilar1} does not depend on $f$ but on its derivatives we can obtain a third order ordinary differential
equation for $g(\eta)=f'(\eta)$.
The corresponding boundary value problem reads
\begin{eqnarray}\nonumber
4 g-\eta^4 g -4 \eta g' -4 \eta^2 g g' + 8 \eta^2 g'' + 4 \eta^3 g'''=0, \\ \label{selfsimilar2}
g(0)=g''(0)=0, \quad
g'(\eta) \to 0 \quad \mbox{as } \eta \to \infty.
\end{eqnarray}
Note that this problem is strongly singular and it is to be solved for $\eta \in [0,\infty)$. Obviously $g \equiv 0$ solves this boundary value problem.
Therefore we will be interested in nontrivial solutions. Since one expects the solutions of (\ref{selfsimilar1}), if any, to behave like a Gaussian,
one should first try to exclude the cases where $f''(0)>0$. We do so in the next statement.

\begin{proposition}\label{prop}
A local solution of the problem
\begin{eqnarray}\label{under}
4 g-\eta^4 g -4 \eta g' -4 \eta^2 g g' + 8 \eta^2 g'' + 4 \eta^3 g'''=0, \\ \nonumber
g(0)=g''(0)=0,\ g'(0)>0,
\end{eqnarray}
blows up in finite time. More precisely, there exists $\overline{\eta}>0$ such that $g'(\eta)>0$ for $\eta\in(0,\overline{\eta})$ and
$\lim_{\eta\to\overline{\eta}}g(\eta)=+\infty$.
\end{proposition}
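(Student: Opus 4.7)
The plan is to unveil the radial-biharmonic structure hidden in the ODE and reduce the blow-up question to a standard second-order super-linear inequality. Setting $h(\eta):=g'(\eta)+g(\eta)/\eta$ (morally the radial Laplacian of the underlying $f$, with $g=f'$), a direct computation shows that the equation is equivalent to the conservative form
\[(\eta\,h'(\eta))' \;=\; \frac{\eta^{2}\,g(\eta)}{4} + g(\eta)\,g'(\eta).\]
The conditions $g(0)=g''(0)=0$ make every term regular at the origin and give $\eta h'(\eta)\to 0$ as $\eta\to 0^{+}$, so integration from $0$ produces the key identity
\[\eta\,h'(\eta) \;=\; \frac{g(\eta)^{2}}{2} + F(\eta), \qquad F(\eta):=\frac{1}{4}\int_{0}^{\eta}s^{2}\,g(s)\,ds.\]

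I would next extract positivity and monotonicity on the maximal interval of existence. Since $h(0)=2g'(0)>0$ and $h'>0$ wherever $g>0$, the function $h$ is strictly increasing and stays above $2g'(0)$. A first-zero argument rules out $g(\eta_{\ast})=0$: one would have $h(\eta_{\ast})=g'(\eta_{\ast})\le 0$, contradicting $h(\eta_{\ast})>2g'(0)$. Similarly, if $g'$ first vanished at some $\eta_{1}>0$, the identity $h'=g''+g'/\eta-g/\eta^{2}$ would force $g''(\eta_{1})=h'(\eta_{1})+g(\eta_{1})/\eta_{1}^{2}>0$, whereas $g'\ge 0$ on $(0,\eta_{1})$ with $g'(\eta_{1})=0$ forces $g''(\eta_{1})\le 0$. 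Hence $g$ and $g'$ are strictly positive on the whole maximal interval, so $g$ is strictly increasing.

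The heart of the argument is then to force finite-$\eta$ blow-up. Set $G(\eta):=\eta\,g(\eta)$, so that $G'=\eta\,h$ and, using the integrated identity, $G''=h+\eta h'=h+g^{2}/2+F\ge g^{2}/2=G^{2}/(2\eta^{2})$. The logarithmic substitution $\tau:=\log\eta$, $\widetilde G(\tau):=G(e^{\tau})$, turns this into $\ddot{\widetilde G}-\dot{\widetilde G}\ge \widetilde G^{\,2}/2$ with $\dot{\widetilde G}=\eta^{2}h>0$; dropping the positive $\dot{\widetilde G}$ yields the clean inequality $\ddot{\widetilde G}\ge \widetilde G^{\,2}/2$. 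A classical Kaplan-type argument (multiply by $\dot{\widetilde G}>0$, integrate to $\dot{\widetilde G}^{\,2}\ge \widetilde G^{\,3}/3-\text{const}$, then separate variables using $\int^{\infty}dY/Y^{3/2}<\infty$) produces blow-up of $\widetilde G$ at some finite $\tau^{\ast}$. Setting $\overline{\eta}:=e^{\tau^{\ast}}<\infty$ and combining with the monotonicity already established gives $g(\eta)\to+\infty$ as $\eta\to\overline{\eta}^{-}$ with $g'>0$ on $(0,\overline{\eta})$, as claimed.

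The step I expect to cost the most care is the first one: deriving $(\eta h')'=\eta^{2}g/4+gg'$ and cleanly justifying that $\eta h'(\eta)\to 0$ at the origin. The cancellations among the singular terms $g'/\eta^{2}$ and $g/\eta^{3}$ work only because of the two vanishing conditions $g(0)=g''(0)=0$, so one has to expand $g$ in Taylor series around $0$. Once this integrated identity is in place, the positivity statements and the Kaplan blow-up via the logarithmic substitution follow along essentially standard lines.
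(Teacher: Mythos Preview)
Your argument is correct, and its skeleton matches the paper's proof: rewrite the equation in a conservative form, use this to force $g>0$ and $g'>0$ on the whole maximal interval, then perform a logarithmic change of variable and run a Kaplan--type energy argument to exclude global existence. In fact your key identity $(\eta h')'=\eta^{2}g/4+gg'$ is, after multiplication by $\eta^{2}$, exactly the paper's observation that $4(\eta^{3}g''-\eta^{2}g'+\eta g)'=\eta^{2}g(4g'+\eta^{2})$.

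Where the two diverge is in the packaging of the blow-up step. The paper substitutes $\eta=e^{r}$ directly into the equation, obtaining a \emph{third-order} ODE $4(h'''-h''-h')=4e^{r}h'h+(e^{4r}-4)h$ for $h(r)=g(e^{r})$, and then integrates twice (once in $r$, once after multiplying by $h'$) to reach $h'^{2}\ge \tfrac14 h^{3}$ for large $r$. You instead introduce $G(\eta)=\eta g(\eta)$, recognise $G'=\eta h$ with $h$ the radial Laplacian of the underlying profile $f$, and use the integrated identity to get the \emph{second-order} differential inequality $\ddot{\widetilde G}\ge \widetilde G^{\,2}/2$ after the log substitution. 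This buys you a cleaner and more conceptual route: one fewer integration, and an explanation (via the radial biharmonic structure $\Delta^{2}f=\tfrac1r(rh')'$) for why the conservative form exists at all, something the paper leaves as an algebraic miracle. The paper's version, on the other hand, never needs to verify the limit $\eta h'(\eta)\to0$ at the origin, since it integrates $(\eta^{3}g''-\eta^{2}g'+\eta g)'>0$ directly and that bracket manifestly vanishes at $\eta=0$; your Taylor-expansion check of the singular cancellation is the honest price for the extra structure.
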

\begin{proof} Let $g$ be a local solution of (\ref{under}). For contradiction, assume that there exists a first $\eta_0>0$ such that $g'(\eta_0)=0$.
Then $g(\eta)>0$ and $g'(\eta)>0$ for all $\eta\in(0,\eta_0)$. In turn, if we rewrite the equation appearing in~\eqref{under} as
$$
4(\eta^3 g'' - \eta^2 g' + \eta g)'=\eta^2 g (4g'+\eta^2),
$$
this readily shows that $(\eta^3 g'' - \eta^2 g' + \eta g)'>0$ in $(0,\eta_0]$. Since the term inside the bracket vanishes at $\eta=0$ this yields
$\eta^3 g''(\eta) - \eta^2 g'(\eta) + \eta g(\eta)>0$ in $(0,\eta_0]$, which we may also rewrite as
$$g''(\eta) - \frac{g'(\eta)}{\eta} + \frac{g(\eta)}{\eta^2}>0\qquad\mbox{for }\eta\in(0,\eta_0].$$
This proves that $(g'-\frac{g}{\eta})'>0$ and since the term inside this bracket vanishes as $\eta\to0$, this also gives
$g'(\eta_0)>\frac{g(\eta_0)}{\eta_0}>0$, contradicting the characterization of $\eta_0$. We have so proved that
\begin{equation}\label{positivederiviative}
g'(\eta)>0\qquad\mbox{for all }\eta\in[0,\overline{\eta})
\end{equation}
where $g$ is a local solution to (\ref{under}) and $\overline{\eta}$ is the endpoint of its interval of continuation, possibly infinite.
Moreover, what we have seen also proves that
\begin{equation}\label{infinite}
\lim_{\eta\to\overline{\eta}}g(\eta)=+\infty
\end{equation}
in both the cases $\overline{\eta}<\infty$ (blow up in finite time) and $\overline{\eta}=+\infty$ (global solution).\par
Now put $\eta=e^r$, $g(\eta)=h(\log \eta)$, $h(r)=g(e^r)$. Then the equation in (\ref{under}) reads
\begin{equation}\label{hr}
4\left(h'''(r)-h''(r)-h'(r)\right)=4e^rh'(r)h(r)+(e^{4r}-4)h(r),\qquad r\in(-\infty,+\infty),
\end{equation}
while conditions (\ref{positivederiviative}) and (\ref{infinite}) become
\begin{equation}\label{conditions}
h'(r)>0\quad\mbox{for all }r<\overline{r},\qquad \lim_{r\to\overline{r}}h(r)=+\infty,\qquad \overline{r}=\log\overline{\eta}.
\end{equation}
For contradiction, assume that $\overline{r}=+\infty$. From (\ref{hr}) and (\ref{conditions}) we infer that
$$\left(h''(r)-h'(r)-h(r)\right)' \ge e^rh'(r)h(r)\ge 2h'(r)h(r)\quad\mbox{for all }r\ge\log 2.$$
By integrating this inequality over $[\log 2,r]$ we get
$$
h''(r)-h'(r)-h(r)\ge h(r)^2+\gamma\quad\mbox{for all }r\ge\log 2
$$
where $\gamma=h''(\log 2)-h'(\log 2)-h(\log 2)-h(\log 2)^2$. By (\ref{conditions}) we may multiply this inequality by $h'(r)$ and maintain its sense:
$$
h''(r)h'(r)\ge h'(r)^2+h(r)h'(r)+h(r)^2h'(r)+\gamma h'(r)\ge h(r)h'(r)+h(r)^2h'(r)+\gamma h'(r)
$$
for all $r\ge\log 2$.
Let us now integrate this inequality over $[\log 2,r]$; we obtain
$$\frac{1}{2} h'(r)^2\ge \frac{1}{2} h(r)^2+\frac{1}{3} h(r)^3+\gamma h(r)+\delta$$
where $\delta=\frac{1}{2} h'(\log 2)^2-\frac{1}{2} h(\log 2)^2-\frac{1}{3} h(\log 2)^3-\gamma h(\log 2)$. By (\ref{conditions}) we know that there exists $R>\log 2$
such that the latter inequality yields
$h'(r)^2\ge \frac{1}{4} h(r)^3$ for all $r\ge R$.
By taking the square root of this inequality we obtain $\frac{h'(r)}{h(r)^{3/2}}\ge\frac{1}{2}$ which, upon integration over $[R,r]$ yields
$$\frac{1}{h(R)^{1/2}}\ge\frac{r-R}{4}+\frac{1}{h(r)^{1/2}}\qquad\mbox{for all }r\ge R.$$
By letting $r\to+\infty$ we reach a contradiction.\end{proof}

\begin{figure}
\centering \subfigure[]{
\includegraphics[width=0.45\textwidth]{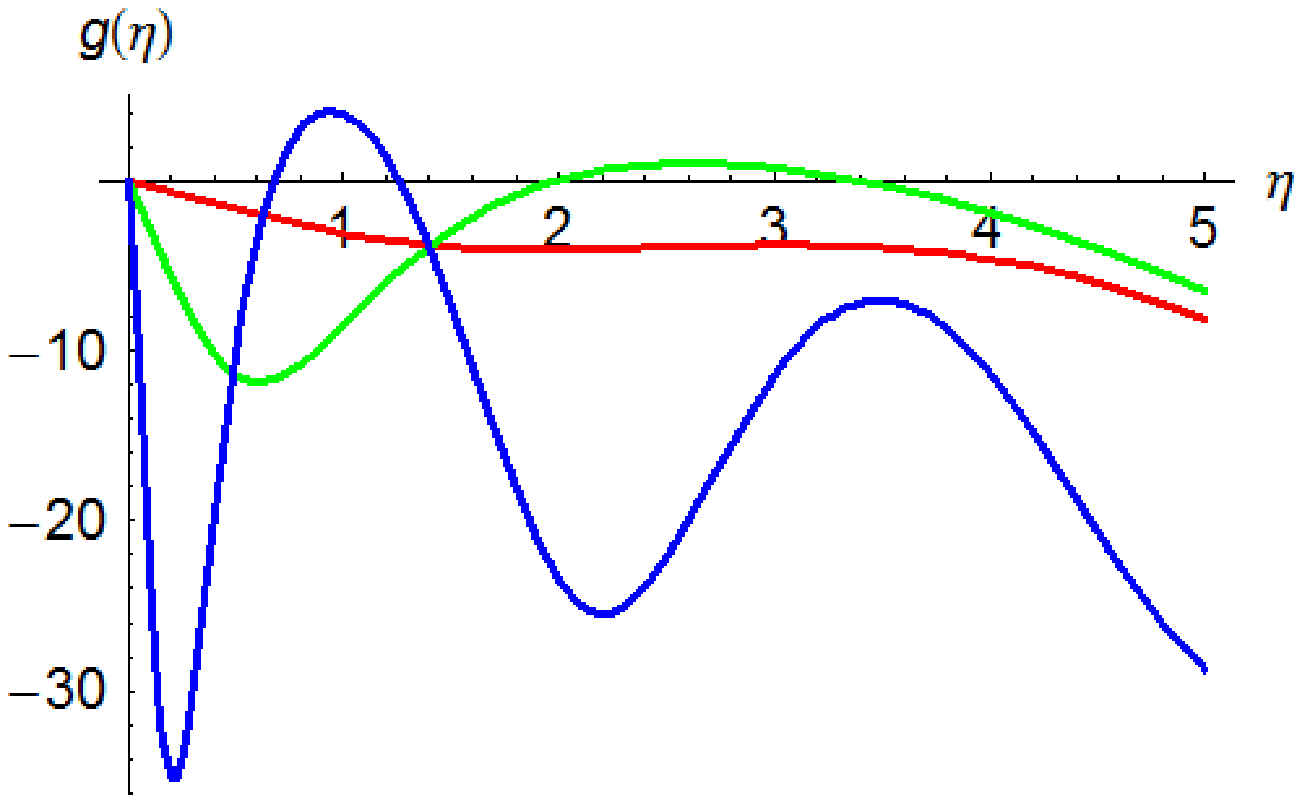}
\label{shot1}} \subfigure[]{
\includegraphics[width=0.45\textwidth]{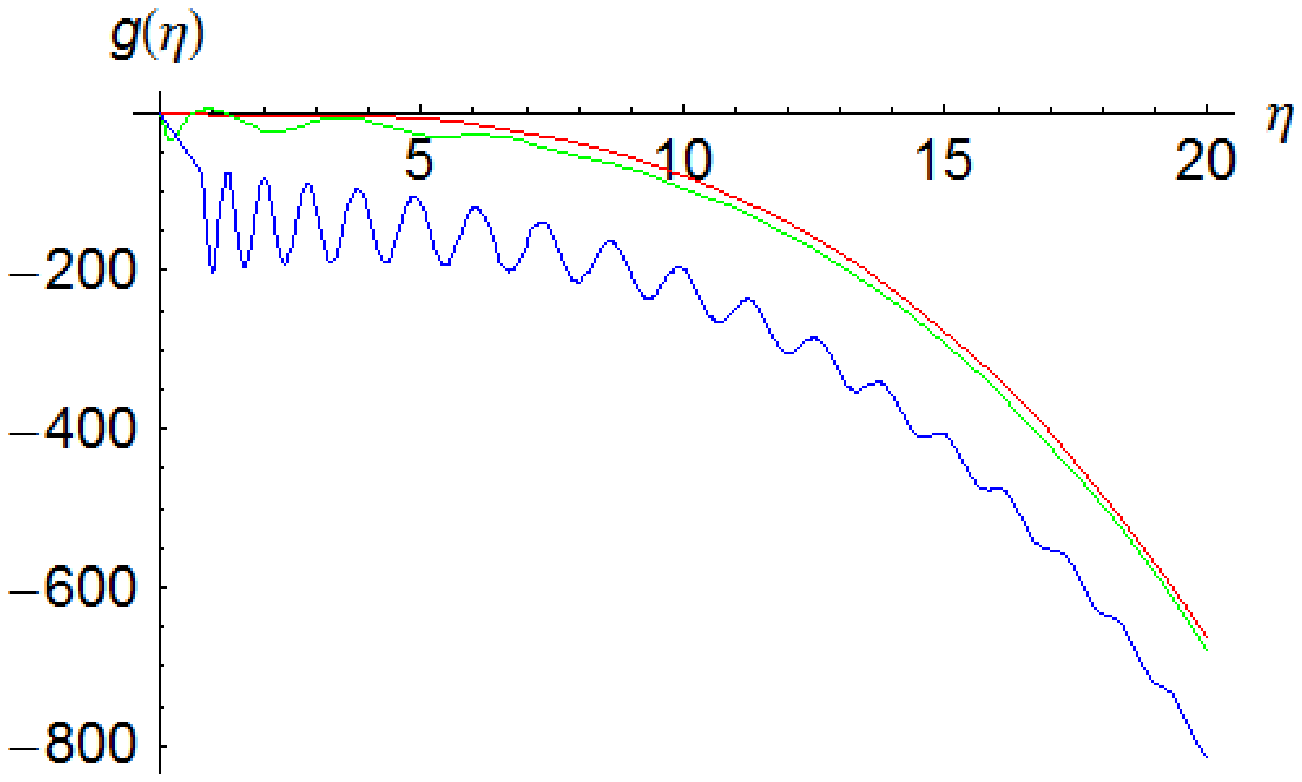}
\label{shot2}} \\ \subfigure[]{
\includegraphics[width=0.45\textwidth]{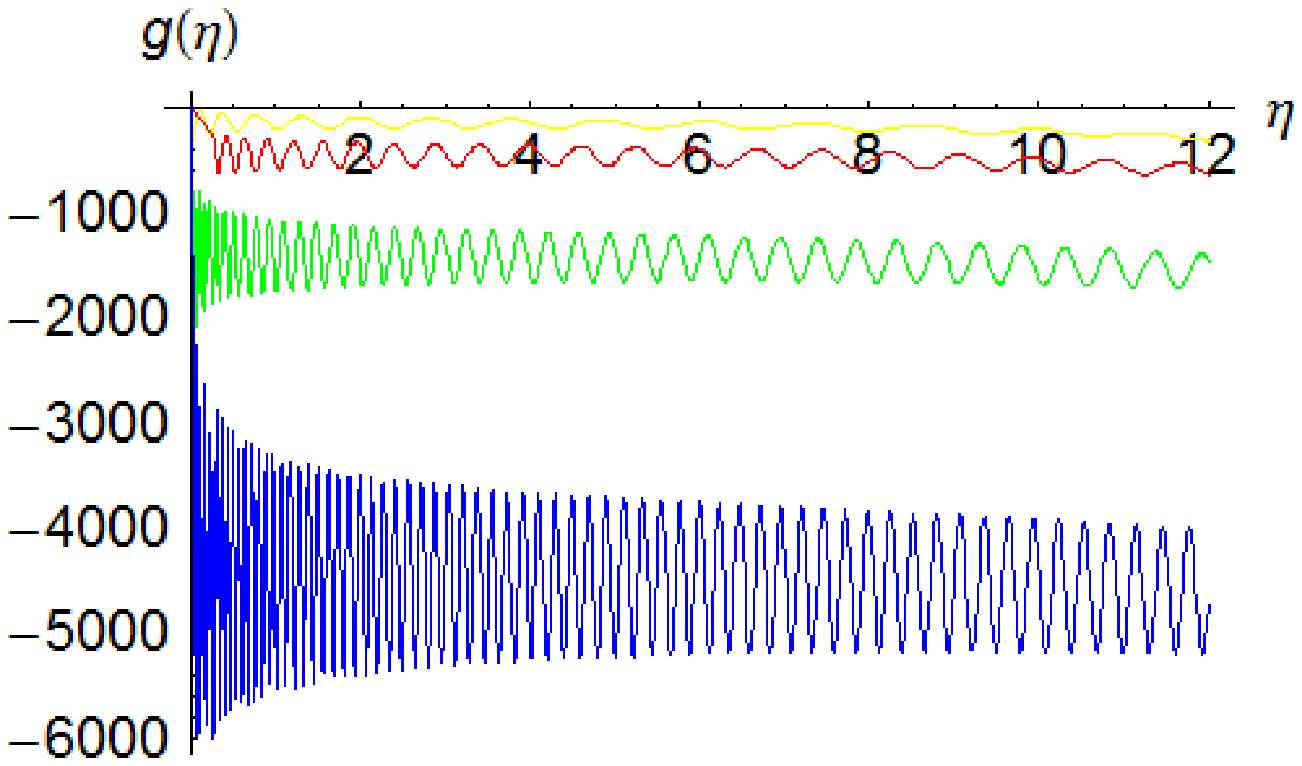}
\label{shot3}} \subfigure[]{
\includegraphics[width=0.45\textwidth]{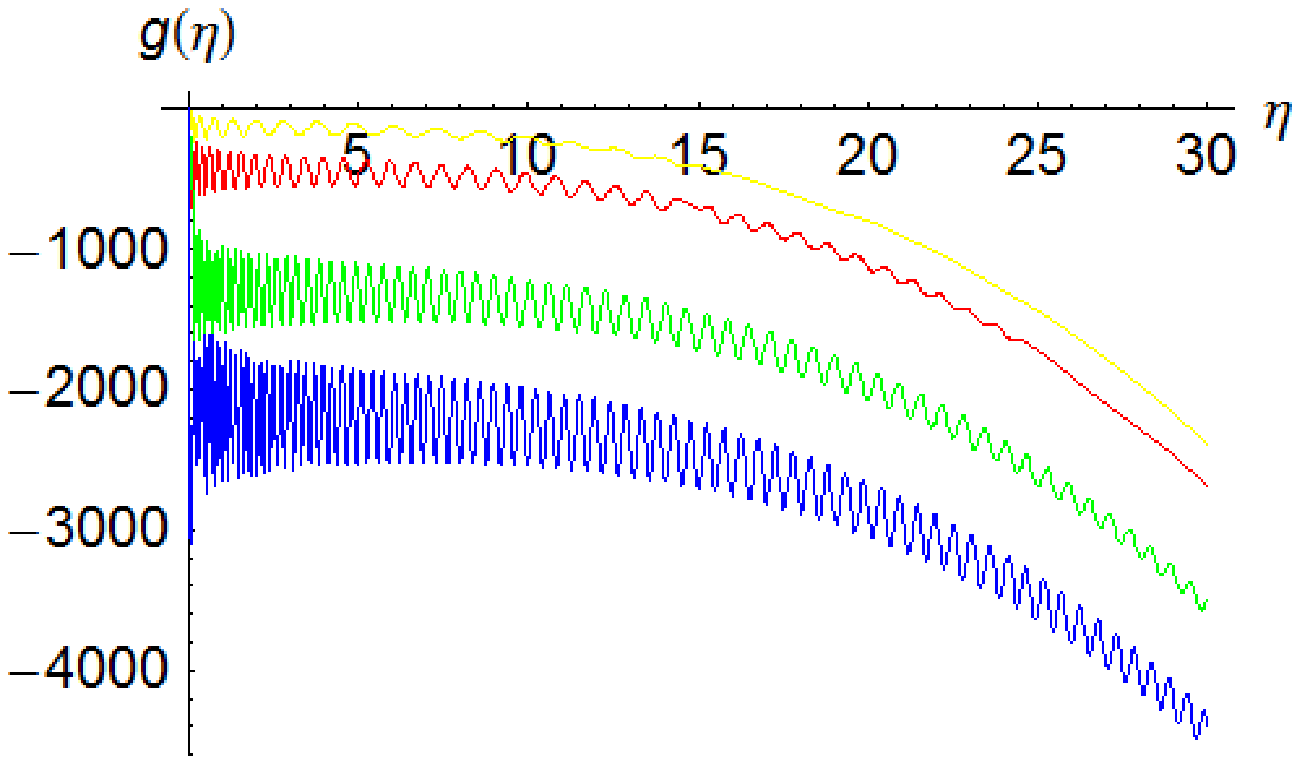}
\label{shot4} } \caption{Numerical solutions of the ordinary differential equation corresponding to boundary value
problem~\eqref{selfsimilar2}. The boundary conditions $g(0)=g''(0)=0$ were employed in this numerical integration. The
third condition is arbitrary and chosen to be: $g'(0)=-1$ (red line), $g'(0)=-10$ (green line) and $g'(0)=-10^2$ (blue line)
in figure~\ref{shot1}; $g'(0)=-1$ (red line), $g'(0)=-10^2$ (green line) and $g'(0)=-10^4$ (blue line)
in figure~\ref{shot2}; $g'(0)=-10^4$ (yellow line), $g'(0)=-10^5$ (red line), $g'(0)=-10^6$ (green line)
and $g'(0)=-10^7$ (blue line) in figure~\ref{shot3}; $g'(0)=-10^4$ (yellow line), $g'(0)=-10^5$ (red line),
$g'(0)=-8.5 \times 10^5$ (green line) and $g'(0)=-2.4 \times 10^6$ (blue line)
in figure~\ref{shot4}.} \label{shots}
\end{figure}

In the case $g'(0)<0$, we have numerically integrated this differential equation, for an example see figure~\ref{shots}.
Based on Proposition \ref{prop} and on these preliminary numerical experiments we conjecture the following result:

\begin{Con}\label{C1}
   There exist no nontrivial solutions to boundary value problem~\eqref{selfsimilar2}.
\end{Con}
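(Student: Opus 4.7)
The plan is to prove the two claims in sequence: first that $g'$ stays strictly positive on the entire maximal interval of continuation $[0,\overline{\eta})$, and then that $g$ must escape to $+\infty$ in finite $\eta$.

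For the monotonicity claim I would argue by contradiction, letting $\eta_0>0$ be the first zero of $g'$ after the origin; on $(0,\eta_0)$ both $g$ and $g'$ are strictly positive. The key algebraic observation is that the linear part $4g-4\eta g'+8\eta^2 g''+4\eta^3 g'''$ is exactly $4(\eta^3 g''-\eta^2 g'+\eta g)'$, so the ODE can be rewritten as
\[
4(\eta^3 g''-\eta^2 g'+\eta g)'=\eta^2 g(\eta^2+4g'),
\]
whose right-hand side is strictly positive on $(0,\eta_0)$. Since the bracket vanishes at $\eta=0$ by virtue of $g(0)=g''(0)=0$ together with the finiteness of $g'(0)$, this yields $\eta^3 g''-\eta^2 g'+\eta g>0$ on $(0,\eta_0]$. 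Dividing by $\eta^3$ recognises the left-hand side as $(g'-g/\eta)'$, and since $g'(\eta)-g(\eta)/\eta\to 0$ as $\eta\to 0^+$, we conclude $g'(\eta_0)>g(\eta_0)/\eta_0>0$, contradicting the definition of $\eta_0$.

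For the blow-up claim, once $g$ is known to be positive and strictly increasing on $[0,\overline{\eta})$, the only scenarios are $\overline{\eta}<\infty$ with $g\to+\infty$ (the desired conclusion) or $\overline{\eta}=+\infty$ with $g$ remaining defined globally. To rule out the latter I would pass to logarithmic scale via $\eta=e^r$, $h(r)=g(e^r)$, which converts the ODE into one of the form
\[
4(h'''-h''-h')=(e^{4r}-4)h+4e^r h h'.
\]
On $r\geq\log 2$ both factors on the right become favourable, producing a differential inequality of the form $(h''-h'-h)'\geq 2hh'$. Integrating once gives $h''-h'-h\geq h^2+\text{const}$; multiplying by $h'>0$ and integrating again yields a Bernoulli-type bound $(h')^2\geq \tfrac{1}{4}h^3$ for $r$ large. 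Rewriting this as $h'/h^{3/2}\geq \tfrac{1}{2}$ and integrating a final time forces $h$ to explode in finite $r$, contradicting $\overline{\eta}=+\infty$.

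The hard part is clearly the first algebraic step: without noticing that the fourth-order linear operator factors as $4(\eta^3 g''-\eta^2 g'+\eta g)'$ and that this quantity dividing by $\eta^3$ collapses further to $(g'-g/\eta)'$, no obvious comparison or energy argument seems to deliver the monotonicity of $g$ near $\eta=0$, where the equation is singular. For the blow-up step the conceptual path is more standard, but one must handle two separate nonlinear contributions (the quadratic $4\eta^2 gg'$ and the highly superlinear $\eta^4 g$); the change of variable $\eta=e^r$ is what makes both conspire into a single differential inequality of superquadratic type, and the threshold $r\geq\log 2$ arises precisely because below it the factor $e^{4r}-4$ loses its sign.
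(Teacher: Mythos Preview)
Your proposal does not prove Conjecture~\ref{C1}; it proves Proposition~\ref{prop}. The statement you were asked to address is a \emph{conjecture} in the paper --- the authors do not prove it, and your argument does not prove it either. What you have written is, essentially line for line, the paper's own proof of Proposition~\ref{prop}: the factorisation $4(\eta^3 g''-\eta^2 g'+\eta g)'=\eta^2 g(4g'+\eta^2)$, the further collapse to $(g'-g/\eta)'>0$, the logarithmic change of variable $\eta=e^r$, the chain of integrations leading to $(h')^2\ge \tfrac14 h^3$, and the threshold $r\ge\log 2$ all appear in the paper exactly as you describe them.

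The genuine gap is that your entire argument is predicated on $g'(0)>0$. The very first step --- showing $g'>0$ on $(0,\eta_0)$ --- uses $g'(0)>0$ to get the sign of $g$ and $g'$ near the origin, and everything downstream inherits this. Conjecture~\ref{C1} requires ruling out nontrivial solutions for \emph{all} initial slopes, in particular $g'(0)<0$, and this is precisely the case the paper leaves open, offering only numerical evidence (Figure~\ref{shots}) in its support. Your factorisation yields $(\eta^3 g''-\eta^2 g'+\eta g)'=\tfrac14\eta^2 g(4g'+\eta^2)$, whose right-hand side has no definite sign when $g<0$ and $g'$ may change sign; neither the monotonicity argument nor the subsequent blow-up chain survives. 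You would also need to dispose of the case $g'(0)=0$ via a uniqueness statement at the singular endpoint, which is not automatic. In short, you have reproduced the paper's partial result but have not advanced on the open problem.
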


It is interesting to highlight one property of the solutions to~\eqref{under} for any $g'(0)$
used in the numerical integrations shown in figure~\ref{shots}. If we assume that $g \in C^3[0,\infty)$, we have
$$
g(\eta) = g'(0) \eta + g'''(0)\frac{\eta^3}{6} + \mathrm{o}(\eta^3), \quad
g'(\eta) = g'(0) + g'''(0)\frac{\eta^2}{2} + \mathrm{o}(\eta^2),
$$
$$
g''(\eta) = g'''(0) \eta + \mathrm{o}(\eta), \quad
g'''(\eta) = g'''(0) + \mathrm{o}(1) \quad \mbox{as } \eta \to 0.
$$
Substituting this into the equation and simplifying we arrive at
$$
\left[ \frac{8}{3}g'''(0)-g'(0)^2 \right]\eta^3 + \mathrm{o}(\eta^3)=0,
$$
that implies $g'''(0)=3 g'(0)^2/8>0$. Consequently $g''(\eta)>0$ in a right neighborhood of $\eta=0$ and thus
$g(\eta)$ is convex in a right neighborhood of $\eta=0$. Note that this feature can be observed in the numerical solutions
in figure~\ref{shot1}.

Let us now make some final remarks. Note first that a proof of conjecture~\ref{C1} would not imply the nonexistence of (any kind of) self-similar
solutions to partial differential equation~\eqref{parabolicpde}.
The existence of these is actually a question that remains open.
Also, the trivial solution $g \equiv 0$ corresponds to a constant $f$,
but the only constant $f$ that solves boundary value problem~\eqref{selfsimilar1} is $f \equiv 0$ due to the extra boundary condition.
As we have already mentioned, this corresponds to the solution $u \equiv 0$ of the partial differential equation.
In connection to this, any nontrivial solution to problem~\eqref{selfsimilar2} gives rise to a nontrivial solution $f$
to~\eqref{selfsimilar1} only if it satisfies an extra cancelation condition.

{\small
}

{\small
{\em Authors' addresses}:
{\em Carlos Escudero}, Universidad Aut\'onoma de Madrid \& ICMAT (CSIC-UAM-UC3M-UCM), Madrid, Spain.
e-mail: \texttt{carlos.escudero@\allowbreak uam.es}.
{\em Filippo Gazzola}, Politecnico di Milano, Milan, Italy.
e-mail: \texttt{filippo.gazzola@\allowbreak polimi.it}.
{\em Robert Hakl}, Institute of Mathematics AS CR, Brno, Czech Republic.
e-mail: \texttt{hakl@\allowbreak ipm.cz}.
{\em Ireneo Peral}, Universidad Aut\'onoma de Madrid, Madrid, Spain.
e-mail: \texttt{ireneo.peral@\allowbreak uam.es}.
{\em Pedro Torres}, Universidad de Granada, Granada, Spain.
e-mail: \texttt{ptorres@\allowbreak ugr.es}.
}

\end{document}